\newcommand{\dist}{\mathop{\rm dist}\nolimits}
\newcommand{\id}{\mathop{\rm Id}\nolimits}
\newcommand{\D}{\mathop{\rm D}\nolimits}
\newcommand{\R}{\mathbb{R}}
\newcommand{\N}{\mathbb{N}}
\newcommand{\Z}{\mathbb{Z}}
\newcommand{\p}{\mathbb{P}}
\newcommand{\T}{\mathbb{T}}
\newcommand{\E}{\mathbb{E}}
\newcommand{\dd}{\mbox{d}}
\newcommand{\emr}[1]{\textcolor{black}{#1}}
\theoremstyle{plain}
\newtheorem{thm}{Theorem}
\newtheorem{lm}{Lemma}
\theoremstyle{definition}
\newtheorem*{defn}{Definition}
\newtheorem{cor}{Corollary}
\theoremstyle{remark}
\newtheorem{rem}{Remark}
\title{Probabilistic shadowing in linear skew products}
\author{Grigorii V. Monakov}
\address{Department of Mathematics, University of California, Irvine, CA~92697, USA.} 
\email[Corresponding author]{gmonakov@uci.edu.} 
\thanks{G. M. was supported in part by NSF grant DMS--2247966 (PI: A.\,Gorodetski).}
\author{Sergey B. Tikhomirov}
\address{Pontificia Universidade Catolica do Rio de Janeiro, Rua Marques de Sao Vicente, 225, Gavea, Rio de Janeiro, RJ - Brasil CEP: 22451-900; Instituto de Matematica Pura e Applicada, Estrada Dona Castorina, 110, Jardim Botanico, Rio de Janeiro, RJ - Brasil, CEP: 22460-320.} \email{sergey.tikhomirov@gmail.com}
\thanks{S. T. was supported by FAPERJ PDS 2021, process code E-26/202.311/2021 (261921), Projeto Paz and Coordenacao de Aperfeicoamento de Pessoal de Nivel Superior - Brasil (CAPES) - Finance Code 001}
\keywords{Shadowing, random dynamical systems, skew products, nonuniform hyperbolicity, large deviations.}
\subjclass{37B65, 60F10}
\begin{document}

    \begin{abstract}

        We investigate the probability of shadowing of a random finite pseudotrajectory by an exact trajectory for linear skew products.
        We describe general conditions under which a random pseudotrajectory can be shadowed with polynomial (with respect to its length) precision with high probability.
        Examples satisfying that general condition are continuous linear skew products over Bernoulli shift, doubling map on a circle, and any Anosov linear map on a torus. The main tool used in the proof is Cramer's large deviation theorem.
    \end{abstract}

    \maketitle

\section{Introduction}


Shadowing of pseudotrajectories is a well-developed area of the qualitative theory of dynamical systems. There are a lot of results that show the connection between the shadowing property and structural stability. First of all, we would mention celebrated shadowing lemma by D. Anosov and R. Bowen, stating that a diffeomorphism has shadowing property near hyperbolic set \cite{Anosov, Bowen}. It is also well known that if a diffeomorphism is structurally stable on the whole manifold then it has shadowing property \cite{Robinson, Sawada}. See books \cite{Pil, Palm} for a comprehensive survey. Although shadowing property does not imply structural stability (for some examples see \cite{FinHolSh}), this implication is true with some additional assumptions \cite{Abdenur, Sakai, LipSh, FinHolSh}. For a review of recent results on relation between shadowing and structural stability see \cite{SakPil}

At the same time, results of various numerical experiments show that finite pseudotrajectories can be shadowed effectively even for systems that are not structurally stable. For example, S. Hammel, C. Grebogy and J. Yorke using interval arithmetic showed that for logistic map and H\'enon map a typical $d$-pseudotrajectory of length $\sim 1/\sqrt{d}$ can be $\sim \sqrt{d}$ shadowed \cite{Hammel1, Hammel2}. At this time no general statement of this sort is known. 

It is an important question what type of shadowing can be expected in dynamical systems that are not uniformly hyperbolic. In the present paper we study the stochastic setting of the shadowing problem for pseudotrajectories. For the first time stochastic shadowing was introduced in \cite{YorkeYuan}. Later it was shown that considering infinite pseudotrajectories in stochastic setting gives results equivalent to the classical shadowing theory \cite{nonshad}. We introduce a natural way of generating a finite random pseudotrajectory and estimate the probability of it to be shadowed by an exact one. For the first time this approach was suggested in \cite{LinProd} by the second author for random i.i.d. variables. 

Another approach for shadowing was introduced by A. Katok in his celebrated paper \cite{Kat}. In this approach size of allowed pseudotrajectory errors depends on the index of Pesin set of the point, which allows to establish shadowing and closing lemmas for non-uniformly hyperbolic systems. Note that in the approach discussed in present paper sizes of pseudotrajectory errors are uniform and do not depend on the point. See also \cite{PPT} where a probabilistic approach was considered for nonuniform sizes of pseudotrajectory errors.

 Closely related to the probabilistic shadowing is the notion of stochastic stability. 
 Dynamical system is \textit{stochastically stable} if the stationary measure for the randomly perturbed dynamics is close  to the invariant measure of the original system. This property encodes important information on statistics of dynamical behaviour. It is known that stochastic stability holds for hyperbolic systems  \cite{Kif86, Kif88, You86} and for a large class of non uniformly hyperbolic systems \cite{Via97, AlA03, BeV06, AAV07, AlVil13, Al06, Blank}. While notions of stochastic stability and probabilistic shadowing both rely on similarities of random pseudotrajectories  and exact trajectories there is no known formal relation between them.


It was shown in works \cite{LipSh, FinHolSh, Monakov, PerSh, Mor2023} that the parameters of the shadowing property for finite pseudotrajectories can be obtained using the asymptotics of the growth rate of the solution of the corresponding inhomogeneous linear system. In this work we study nonuniformly hyperbolic linear skew products. Under certain assumptions we show that there exists a polynomial condition on value of the error, accuracy of shadowing, and the length of a pseudotrajectory, such that the probability that a random pseudotrajectory can be shadowed by an exact trajectory tends to one as the length tends to infinity. Next, we show that our theorem is applicable for three fundamental chaotic examples: skew products over Bernoulli shift, doubling map on a circle and linear Anosov map on a torus. 


The proof we present is based on the Cramér's large deviations theorem that is known to be true for dynamical systems with various rates of mixing \cite{DK1, DK2, CDK}. The last fact suggests that our approach might be used for establishing the shadowing property in a more general setting. {It is also worth noticing that some connections were established between mixing properties, large deviations and stochastic stability \cite{BK, BV, G}. 
Even though a formal connection between these phenomena and probabilistic shadowing has not been found yet, we believe that they are close in nature.}


\section{Statement of the main result}
\label{chap:1}

Consider a compact metric space $(X, \dist_X)$, a continuous map
\begin{equation*}
    T: X \to X,
\end{equation*}
and a Borel probability measure $\nu$ on $X$. For future purposes we will require that every ball $B$ in $X$ has positive measure $\nu$. Let us also consider a continuous function $\lambda: X \to \R_{+}$ ($\R_+ = (0, +\infty)$) such that
\begin{gather}
    \label{lambda_exp}
    \E_{\nu}(\log(\lambda)) = \int_{X} \log(\lambda(x)) \dd \nu(x) \ne 0.
\end{gather}
Define the space $Q$ by the equality $Q = X \times \R$ and consider the measure $m = \nu \times \mathrm{Leb}$ and maximum metric:

\begin{equation*}
    \dist ((w, x), (\Tilde{w}, \Tilde{x})) = \max (\dist_X(w, \Tilde{w}), |x - \Tilde{x}|).
\end{equation*}
We consider the skew product map $f : Q \to Q$ that is given by the following formula:
\begin{gather} \label{skewProdDef}
    f(w, x) = (T(w), \lambda(w) x).
\end{gather}

\emr{In further considerations $T$ would be a Bernoulli shift, or uniformly expanding map, or uniformly hyperbolic map. In that case map $f$ resembles a nonuniformly partially hyperbolic skew product with $X$ being the (hyperbolic or expanding) base and $\R$ representing a one-dimen\-sional central direction. The condition (\ref{lambda_exp}) is analogous to having a nonzero Lyapunov exponent.}


\begin{defn}
    We call the sequence of points $\{y_k\}_{k = a}^b$ a $d$-pseudotrajectory for the map $f:Q \to Q$ if 
    \begin{equation*}
        \dist(y_{k+1}, f(y_k)) < d, \quad k \in \{a, \dots, b - 1\}.
    \end{equation*}
\end{defn}
\begin{defn}
    We say that trajectory $\{x_k\}_{k = a}^b$ $\varepsilon$-shadows the pseudotrajectory $\{y_k\}$ if
    
    \begin{equation*}
        \dist(y_k, x_k) < \varepsilon, \quad k \in \{a, \dots, b\}.
    \end{equation*}
\end{defn}

\emr{For every $q \in Q$, every radius $d > 0$, and every natural $N \in \N$ we denote by $\Omega_{q, d, N}$ the set of all $d$-pseudotrajectories of the length $N$ that start at the point $q$. }

\emr{
In order to define notion of random pseudotrajectory we will use construction of Markov chains \cite{Revuz}. Let us denote by $B(q, r)$ the open ball of radius $r$ in the space $Q$ centered at the point $q$ and define a Markov kernel $K_{d}$ (see \cite[Definition 5.1]{DK1} and \cite[Definition 1.8]{Revuz}) on $Q$ using the following formula:
\begin{equation} \label{kernel}
    K_{d} (x, A) = \frac{m(A \cap B(f(x), d))}{m(B(f(x), d))} \quad \text{for every $x \in Q$ and every Borel $A \subset Q$.}
\end{equation}
Following notations from \cite[Chapter 2]{Revuz} (with $\Omega = Q$ and $\mathcal{F}$ being the Borel $\sigma$-algebra on $Q$) we set the initial distribution $P_0 = \delta_{q}$ -- delta measure at a fixed $q \in Q$. Together with the transition probability $K_d$ that gives rise to a Markov chain $\{\xi^{(d)}_j\}_{j = 0}^{\infty}$ on $Q$. It follows immediately from the definition of $K_{d}$ that, almost every realization of $(\xi^{(d)}_0, \xi^{(d)}_1, \ldots, \xi^{(d)}_{N})$ lies in $\Omega_{q, d, N}$. Hence, the distribution of $(\xi^{(d)}_0, \xi^{(d)}_1, \ldots, \xi^{(d)}_{N})$ is supported on $\Omega_{q, d, N}$. For a positive number $\varepsilon > 0$ let us denote by $p (q, d, N, \varepsilon)$ the probability that a pseudotrajectory from $\Omega_{q, d, N}$ chosen at random with respect to the distribution of $(\xi^{(d)}_0, \xi^{(d)}_1, \ldots, \xi^{(d)}_{N})$ can be $\varepsilon$-shadowed by an exact trajectory. The above construction is a generalization of the systems that were studied in \cite{LinProd, YorkeYuan}.
}



\emr{In order to introduce a natural initial distribution for the Markov chain described above we need the following lemma about shift invariance.} We will use Lemma 1 from \cite{LinProd} without repeating its proof because the original one works in our setting without any changes.
\begin{lm}
    \label{unif_lemma}
    Consider two points $q = (w, x)$ and $\Tilde{q} = (w, 0)$. For arbitrary positive numbers $d, \varepsilon > 0$, $N \in \N$ the following equality holds:
    \begin{gather*}
        p(q, d, N, \varepsilon) = p(\Tilde{q}, d, N, \varepsilon).
    \end{gather*}
\end{lm}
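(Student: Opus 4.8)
The plan is to produce an explicit measure-preserving bijection $\Phi\colon\Omega_{q,d,N}\to\Omega_{\tilde q,d,N}$ which also preserves $\varepsilon$-shadowability, and then read off the equality of probabilities from it.

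First I would exploit the product structure of $f$. Because the metric on $Q$ is the maximum metric and $f(w,x)=(T(w),\lambda(w)x)$, the ball $B(f(w,x),d)$ equals $B_X(T(w),d)\times(\lambda(w)x-d,\lambda(w)x+d)$, and the reference measure $m=\nu\times\mathrm{Leb}$ factorises accordingly. Consequently, along a $d$-pseudotrajectory $\{(w_k,x_k)\}$ the base sequence $\{w_k\}$ is itself a Markov chain on $X$ whose law is independent of the fibre coordinate, and, conditionally on $\{w_k\}$, the increments $\xi_k:=x_{k+1}-\lambda(w_k)x_k$ are independent and uniformly distributed on $(-d,d)$. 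Thus, writing $\Lambda_k:=\lambda(w_{k-1})\cdots\lambda(w_0)$ with $\Lambda_0:=1$, a pseudotrajectory from $(w,x_0)$ is recovered from the pair $(\{w_k\},\{\xi_k\})$ as the unique solution of the linear recursion $x_{k+1}=\lambda(w_k)x_k+\xi_k$ with the prescribed initial value $x_0$; denoting this reconstruction map by $R_{x_0}$, the measure $\p$ on $\Omega_{q,d,N}$ is $(R_x)_*\mathcal{L}$ and the measure $\p$ on $\Omega_{\tilde q,d,N}$ is $(R_0)_*\mathcal{L}$ for one and the same law $\mathcal{L}$ of $(\{w_k\},\{\xi_k\})$ (it is the same because $q$ and $\tilde q$ share the base point $w$).

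Next I would set $\Phi(\{(w_k,x_k)\}_{k=0}^N):=\{(w_k,x_k-\Lambda_k x)\}_{k=0}^N$, where $x$ is the fibre coordinate of $q$. Since $x_0-\Lambda_0 x=0$ and $(x_{k+1}-\Lambda_{k+1}x)-\lambda(w_k)(x_k-\Lambda_k x)=x_{k+1}-\lambda(w_k)x_k=\xi_k$, the sequence $\Phi(\{(w_k,x_k)\})$ is a $d$-pseudotrajectory starting at $\tilde q=(w,0)$ with the very same base sequence $\{w_k\}$ and the very same increments $\{\xi_k\}$; the inverse map adds $\Lambda_k x$ back, so $\Phi$ is a bijection onto $\Omega_{\tilde q,d,N}$. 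In the coordinates $(\{w_k\},\{\xi_k\})$ the map $\Phi$ is nothing but replacing the initial value $x$ by $0$, i.e. $\Phi\circ R_x=R_0$; hence $\Phi_*\p=\Phi_*(R_x)_*\mathcal{L}=(R_0)_*\mathcal{L}=\p$, so $\Phi$ transports the measure on $\Omega_{q,d,N}$ to the measure on $\Omega_{\tilde q,d,N}$.

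It remains to check that $\{(w_k,x_k)\}$ is $\varepsilon$-shadowable if and only if $\Phi(\{(w_k,x_k)\})$ is, and this is the step I expect to be the main obstacle. Suppose an exact trajectory $\{f^k(\bar w,\bar x)\}=\{(T^k\bar w,\bar\Lambda_k\bar x)\}$, with $\bar\Lambda_k:=\lambda(T^{k-1}\bar w)\cdots\lambda(\bar w)$, $\varepsilon$-shadows $\{(w_k,x_k)\}$, i.e. $\dist_X(w_k,T^k\bar w)<\varepsilon$ and $|x_k-\bar\Lambda_k\bar x|<\varepsilon$ for $k=0,\dots,N$. The base part of this condition is insensitive to the fibre, hence it is literally the same requirement for $\Phi(\{(w_k,x_k)\})$, so the set of admissible base points $\bar w$ is unchanged. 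The crucial point to verify is that for every admissible $\bar w$ one has $\bar\Lambda_k=\Lambda_k$ for all $k\le N$: the inequality $\dist_X(w_k,T^k\bar w)<\varepsilon$ forces $w_k$ and $T^k\bar w$ to coincide on a block of central coordinates (in particular on the $0$-th coordinate as soon as $\varepsilon\le 1$), which is exactly enough to guarantee $\lambda(w_k)=\lambda(T^k\bar w)$, and hence equality of the two cocycle products. This last matching is immediate when $\lambda$ depends on only finitely many coordinates (the special case treated in \cite{LinProd}), and it is the one point that needs care in general. Granting it, the exact trajectory $\{f^k(\bar w,\bar x-x)\}$ has fibre coordinate $\bar\Lambda_k(\bar x-x)=\bar\Lambda_k\bar x-\Lambda_k x$, so $|(x_k-\Lambda_k x)-\bar\Lambda_k(\bar x-x)|=|x_k-\bar\Lambda_k\bar x|<\varepsilon$, i.e. it $\varepsilon$-shadows $\Phi(\{(w_k,x_k)\})$; the converse follows symmetrically by applying the same argument to $\Phi^{-1}$. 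Therefore $\Phi$ restricts to a bijection between the $\varepsilon$-shadowable members of $\Omega_{q,d,N}$ and those of $\Omega_{\tilde q,d,N}$, and since $\Phi$ preserves $\p$ we conclude $p(q,d,N,\varepsilon)=p(\tilde q,d,N,\varepsilon)$; everything apart from the product-matching $\bar\Lambda_k=\Lambda_k$ is routine bookkeeping around the linear recursion and the splitting of the ball $B(f(w,x),d)$.
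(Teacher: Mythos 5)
Your measure-theoretic bookkeeping is fine: splitting the ball $B(f(w_k,x_k),d)$ into base and fibre factors, coding a pseudotrajectory by $(\{w_k\},\{\xi_k\})$, and observing that the translation $\Phi$ corresponds to changing only the initial fibre value does give a bijection $\Omega_{q,d,N}\to\Omega_{\tilde q,d,N}$ that transports $\p$ to $\p$. The problem is exactly the step you yourself flag and then grant: that every admissible base point $\bar w$ (i.e.\ one with $\dist_X(w_k,T^k\bar w)<\varepsilon$ for all $k$) satisfies $\lambda(w_k)=\lambda(T^k\bar w)$, hence $\bar\Lambda_k=\Lambda_k$. In the present paper $\lambda$ is an \emph{arbitrary} continuous function on $X$, which may depend on all coordinates; closeness of $w_k$ and $T^k\bar w$ forces agreement only on a finite central block, and continuity then gives only $|\log\lambda(w_k)-\log\lambda(T^k\bar w)|\le\delta(\varepsilon)$ with $\delta(\varepsilon)>0$ in general. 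These errors accumulate multiplicatively, so $\Lambda_k/\bar\Lambda_k$ can be of size $e^{\pm k\delta(\varepsilon)}$ over a trajectory of length $N$, and the translated trajectory $f^k(\bar w,\bar x-x)$ then fails to $\varepsilon$-shadow $\Phi(\{(w_k,x_k)\})$; no single corrected initial value $\bar x'$ obviously repairs this, since the needed correction $\Lambda_k x/\bar\Lambda_k$ depends on $k$. Moreover the lemma is stated for \emph{arbitrary} $\varepsilon>0$: since $\dist_X\le 1$, for $\varepsilon>1$ the base condition is vacuous and even the $0$-th coordinate matching you invoke is unavailable. So the equivalence of shadowability under $\Phi$ — the only non-routine part of the argument — is not established in the generality claimed.

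For comparison: the paper does not prove this lemma at all; it cites Lemma 1 of \cite{LinProd}, which treats precisely the special case where $\lambda$ depends on the single coordinate $w^{(0)}$. In that case your matching $\lambda(w_k)=\lambda(T^k\bar w)$ does hold (for $\varepsilon\le 1$) and your argument is essentially the intended one. But in the setting of the present paper the passage from finitely-dependent $\lambda$ to general continuous $\lambda$ is exactly the content that a proof of the lemma must supply (for instance by an approximation or conditioning argument that compares probabilities rather than individual pseudotrajectories), and your proposal leaves that piece open.
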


For positive numbers $d, \varepsilon > 0$, $N \in \N$ let us define
\begin{gather*}
    p(d, N, \varepsilon) = \int_{w \in X} p((w, 0), d, N ,\varepsilon) d\nu.
\end{gather*}

\emr{In other words, we consider initial distribution $P_0 = \nu \times \delta_0$ and form a new Markov chain $\{\xi^{(d)}_j\}_{j = 0}^{\infty}$ using the same kernel $K_{d}$ defined by (\ref{kernel}). We will denote said Markov chain by $\mathbb{M}_d$. From now on we call a random $d$-pseudotrajectory a random realization of $\mathbb{M}_{d}$. We will denote by $\p$ the corresponding probability. }

To continue we will need to introduce three important properties of the basic map~$T$.

\begin{defn}[Property \textbf{I}]
    \emr{ We  say that the map $T$ satisfies Property \textbf{I}, if there exist constants $L, d_0 > 0$ such that for every $N \in \N$ and every $d < d_0$ there exists a function $w: X^{N + 1} \to X$, such that given a random $d$-pseudotrajectory $(\xi^{(d)}_0, \xi^{(d)}_1, \ldots, \xi^{(d)}_{N})$, where $\xi^{(d)}_j = (w_j, x_j)$ we have
    \begin{gather*}
        dist_X(w_j, T^j(w((w_j)_{j = 0}^{N}))) < L d,
    \end{gather*}
    and $w ((w_j)_{j = 0}^{N})$ has distribution $\nu$.}

\end{defn}

\begin{defn}[Property \textbf{II}]
    Let us take a random point $w$ in $X$ distributed with respect to measure $\nu$ and introduce 
    \begin{gather*}
        A_j' = \sum_{p = 0}^{j - 1} \log \left( \lambda(T^p (w)) \right).
    \end{gather*}
    We  say that the map $f$ satisfies Property \textbf{II}, if there exist constants $C, k > 0$, such that for every $j \in \N$ the following holds:
    \emr{\begin{gather}
        \label{property_2}
        \nu \left( \bigg \{ w \in X : \left| \dfrac{A_j'}{j} - \E_{\nu}(\log(\lambda)) \right| > \frac{|\E_{\nu}(\log(\lambda))|}{2} \bigg \} \right) < C e^{- k j}.
    \end{gather}}
\end{defn}

\begin{defn}[Property \textbf{III}]
    \emr{We say that the map $T$ and measure $\nu$ satisfy Property \textbf{III} if there exists $d_0$ such that for any $d < d_0$ the measure $\nu$ is invariant with respect to the projection of $K_d$ on the first coordinate, namely
    \begin{equation*}
        \nu(A) = \int_{X} K_{d} \left( (w, x), \; A \times \R \right) \; \dd \nu(w) \quad \text{for every $x \in \R$ and every Borel $A \subset X$.}
    \end{equation*}}

\end{defn}

Now we can state the main theorem of this work:
\begin{thm}
    \label{skew_prod_thm}
    If a continuous function $\lambda: X \to \R_{+}$ satisfies assumption~(\ref{lambda_exp}) and the map $f$ satisfies Properties \textbf{I}, \textbf{II} and \textbf{III}, then there exist positive numbers $\varepsilon_0 > 0$ and  $\gamma > 0$ such that for every $0 < \varepsilon < \varepsilon_0$ the following equality holds:
    \begin{gather}
        \label{main_thm_ineq}
        \lim_{N \to \infty} p \left( \dfrac{\varepsilon}{N^\gamma}, N, \varepsilon \right) = 1.
    \end{gather}
\end{thm}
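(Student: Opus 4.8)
The plan is to treat the two coordinates separately and reduce the whole statement to a single large-deviation estimate along the Bernoulli shift.

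\emph{Reduction and the base coordinate.} By the preceding lemma we may assume $q_0=(w_0,0)$, and unfolding the fibre recursion gives $x_{k+1}=\lambda(w_k)x_k+\xi_{k+1}$, where, conditionally on $\{w_k\}$, the increments $\xi_k$ are independent and uniform on $(-d,d)$; hence $x_k=\sum_{j=1}^{k}\bigl(\prod_{i=j}^{k-1}\lambda(w_i)\bigr)\xi_j$. For the base coordinate I would form, out of the pseudotrajectory, the ``diagonal'' point $u\in X$ defined by $u^{(n)}=w_k^{(n-k)}$ with $k=\min(\max(n,0),N)$, and check: (i) since $\{w_k\}$ is a $d$-pseudotrajectory with $d\le 2^{-M}$, $M=\lfloor\log_2(1/d)\rfloor$, the entry $w_k^{(n-k)}$ is the same for every admissible $k$, so $u$ is well defined and $T^k u$ coincides with $w_k$ on $[-M,M]$ for all $0\le k\le N$; (ii) under the measure $\p$ with $w_0\sim\nu$, one has $u\sim\nu$, because the coordinates of $u$ are precisely the independent coin flips that generate the pseudotrajectory. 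Fact (i) disposes of the base coordinate for free: the exact orbit I will produce has $X$-component $T^k u$, and $\dist_X(T^k u,w_k)\le 2^{-(M+1)}<\varepsilon$ for every $k$ once $N$ is large, with no probability lost.

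\emph{The fibre.} It remains to pick $x^\ast$ so that $x_k^\ast=\bigl(\prod_{i=0}^{k-1}\lambda(T^i u)\bigr)x^\ast$ stays $\varepsilon$-close to $x_k$. Put $\beta=\E_\nu\log\lambda$ and $\delta_i=\log\lambda(w_i)-\log\lambda(T^i u)$, so $|\delta_i|\le\rho(M)$ with $\rho(M)\to0$ a modulus of continuity of $\log\lambda$. When $\beta<0$ I take $x^\ast=0$: fibre shadowing holds as soon as $\max_{k\le N}|x_k|<\varepsilon$, and $|x_k|\le d\sum_{m\ge0}\exp(-S_{k,m})$, where $S_{k,m}$ is the Birkhoff sum over the window $[k-m,k-1]$ of the observable $-\log\lambda-\rho(M)$, of mean $-\beta-\rho(M)>0$ for $N$ large; the $\delta_i$ only perturb the drift and are never multiplied by a growing factor. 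When $\beta>0$ the fibre correction is instead performed from the right endpoint: one fixes the value at time $N$ and integrates the error backwards, and the error reduces to sums $d\sum_{j>k}\prod_{i=k}^{j-1}\lambda(w_i)^{-1}$, again of the shape $d\sum_m\exp(-S_{k,m})$ with $S_{k,m}$ a Birkhoff sum of $\log\lambda-\rho(M)$ over $[k,k+m-1]$, of positive mean. In both regimes the task collapses to proving, for a suitably large constant $\gamma$,
\begin{gather*}
    \p\Bigl[\ \max_{0\le k\le N}\ \sum_{m\ge 1}\exp\bigl(-S_{k,m}\bigr)\ \ge\ N^{\gamma}\ \Bigr]\ \xrightarrow[\,N\to\infty\,]{}\ 0 .
\end{gather*}

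\emph{Cramér's theorem and the main obstacle.} Because $u\sim\nu$ and $T$ is the exponentially mixing Bernoulli shift, the Birkhoff sums $S_{k,m}$ of a positive-mean observable satisfy a Cramér bound $\p[S_{k,m}\le -a]\le Ce^{-\theta a}$ for a fixed $\theta>0$; its existence is exactly the exponential-moment condition $\E_\nu\lambda^{\mp\theta}<1$, available since $\beta\ne0$ and $\rho(M)$ is eventually small. A union bound over the $O(N)$ windows of length $\le N$ turns the event $\sum_m\exp(-S_{k,m})\ge N^\gamma$ into the existence of a window with $S_{k,m}\le(1-\gamma)\log N$, of probability $O(N^{-\theta(\gamma-1)})$; choosing $\gamma$ with $\theta(\gamma-1)>2$ (and $\varepsilon_0$ small enough that $\rho(M)<|\beta|/2$ throughout) gives the limit above, and integrating over $w_0\sim\nu$ yields $p(\varepsilon/N^\gamma,N,\varepsilon)\to1$. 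I expect the genuinely hard part to be reconciling a merely continuous $\lambda$ with the exact dynamics: for locally constant $\lambda$ (the situation of \cite{LinProd}) one has $\lambda(T^i u)=\lambda(w_i)$, i.e.\ $\delta_i\equiv0$, and no such estimate is needed, whereas in general one must show that the discrepancies $\delta_i$ do not accumulate destructively. This is benign in the contracting regime, but in the expanding regime the large values of $|x_k|$ amplify any mismatch between $\lambda$ along the pseudotrajectory and $\lambda$ along the shadowing orbit, so the correction from the future must be arranged so that every occurrence of $\delta_i$ sits behind a contracting weight — this is the step that requires the most care, and the place where the Cramér estimate above has to be deployed most sharply.
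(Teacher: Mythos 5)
Your overall architecture is the same as the paper's: reduce to fibre coordinate $0$, rebuild an exact base orbit $u$ with $u\sim\nu$ out of the pseudotrajectory (this is exactly Lemma \ref{shad_distr}), reduce fibre shadowing to the event $\max_{k\le N}\sum_{m}e^{-S_{k,m}}\le N^{\gamma}$, and close with a large-deviation estimate plus a union bound that dictates the admissible $\gamma$; your choice $x^{\ast}=0$ in the contracting case is even a little cleaner than the paper's two-step comparison. However, the probabilistic input you rely on is not justified as stated, and it is precisely the step where the paper does its real work. For a merely continuous $\lambda$ the variables $\log\lambda(T^{i}u)$ are \emph{not} independent, so the single-site condition $\E_{\nu}\lambda^{\mp\theta}<1$ does not factorize into a bound on $\E_{\nu}\exp(-\theta S_{k,m})$, and ``exponential mixing of the Bernoulli shift'' is a statement about Hölder or locally constant observables, not about arbitrary continuous ones; the uniform Chernoff/Cramér bound $\p[S_{k,m}\le -a]\le Ce^{-\theta a}$ is therefore exactly what has to be proved, not what can be quoted. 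The paper handles this by constructing a locally constant $\Tilde{\lambda}$ (depending on coordinates in $\{-t,\dots,t\}$) with $\E\log\Tilde{\lambda}=\E\log\lambda$ and $|\log\lambda-\log\Tilde{\lambda}|\le|a|/2$, proving the deviation bound for $\Tilde{\lambda}$ by splitting the Birkhoff sum into $2t+1$ interleaved i.i.d.\ sums (Lemma \ref{LD_tilde_lm}), transferring it to pseudotrajectories because $\Tilde{\lambda}(T^{p}w)=\Tilde{\lambda}(w_{p})$ once $2d<2^{-t}$ (Lemma \ref{LD_tilde_pst}), and then absorbing the error $|a|/2$ into the deviation threshold. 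Something of this kind (or a genuine pressure/level-3 LDP argument for the full shift) is needed to make your Cramér step legitimate; as written the justification is incorrect even though the conclusion is true.

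The second gap is the expanding case $\beta=\E_{\nu}\log\lambda>0$, which you do not actually prove. Anchoring the exact fibre orbit at time $N$ controls only the noise terms; the discrepancies $\delta_{i}=\log\lambda(w_{i})-\log\lambda(T^{i}u)$ then multiply the typically huge values $|x_{k}|\sim d\,e^{\beta k}$, and $\sum_{i\le N}|\delta_{i}|\le N\rho(M)$ need not be small, since $\rho$ is only a modulus of continuity and $N\rho(M)$ can diverge; you flag this yourself as ``the step that requires the most care'' but leave it open, so your argument covers only half of the hypothesis $\E_{\nu}\log\lambda\ne0$. The paper avoids the difficulty altogether by observing that the shadowing properties of $f$ and $f^{-1}$ are equivalent and assuming $a<0$ from the outset, and even in the contracting regime it treats the pseudo-orbit/orbit mismatch by a separate estimate (the event $S_{2}$), where both exponentials $e^{A_{n}}$ and $e^{A_{n}'}$ are typically small. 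The cheap repair for your proposal is to invoke the same $f\leftrightarrow f^{-1}$ symmetry instead of attacking $\beta>0$ directly; otherwise you must supply the missing backward-contraction estimate for the mismatch, which is not routine.
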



Although the conditions in Theorem \ref{skew_prod_thm} might seem rather restrictive, in Section \ref{applications} we will use this result to establish probabilistic shadowing for three fundamental uniformly hyperbolic examples of dynamical system $T$. The examples are the following:
\begin{enumerate}[label=(\Alph*)]
    \item
    \emph{Bernoulli shift.} \label{BS}
    Let $X = \{0, 1\}^{\Z}$ with standard metric and uniform probability measure and $T:X \to X$ -- the left shift;
    
    \item
    \emph{Doubling map.} \label{DM}
    Let $X = \R / \Z \cong \T^1$ -- unit circle with standard metric and Lebesgue measure and $T:X \to X$ given by formula 
    \begin{gather*}
        T(w) = 2w \mod 1;
    \end{gather*}
    
    \item
    \emph{Linear Anosov map on a Torus.} \label{AM}
    Let $X = \R^m / \Z^m \cong \T^m$ -- flat $m$-dimensional torus with standard metric and Lebesgue measure and $T:X \to X$ given by formula 
    \begin{gather*}
        T(w) = Aw \mod 1,
    \end{gather*}
    where $A$ is a hyperbolic matrix with integer entries and determinant $\pm1$.
\end{enumerate}

In Section \ref{applications} we will prove the following 
\begin{thm} \label{examplesThm}
    Let $(X, \dist_X, T, \nu)$ be one of the systems \ref{BS}, \ref{DM} or \ref{AM} and $\lambda: X \to \R_{+}$ be any positive continuous function satisfying (\ref{lambda_exp}). Consider a skew product $f: Q \to Q$, defined by formula (\ref{skewProdDef}). Then $f$ satisfies Properties \textbf{I}, \textbf{II} and \textbf{III}.
\end{thm}

Proofs for these three systems might look similar, but they do contain fundamental differences. Combining theorems \ref{skew_prod_thm} and \ref{examplesThm} we obtain the following

\begin{cor}
    If the function $\lambda: X \to \R_{+}$ satisfies formula~(\ref{lambda_exp}) then the skew product $f$ over systems \ref{BS}, \ref{DM} or \ref{AM} satisfy equality (\ref{main_thm_ineq}).
\end{cor}

\begin{rem}
    Note that a relatively weak regularity condition is imposed on function $\lambda$ -- it only needs to be continuous.
\end{rem} 


\section{Proof of Theorem \ref{skew_prod_thm}}

We will need some auxiliary lemmas to prove the theorem. First of all, let us consider the sequences $(\lambda_j)_{j = 0}^{N - 1}$ and $(r_j)_{j = 1}^N$ such that $|r_j| < 1$. We introduce following notations:
\begin{gather*}
    \tilde{A}_j = \sum_{p = 0}^{j - 1} \log (\lambda_p),\\
    z_0 = 0, \quad z_{j + 1} = z_j + \dfrac{r_j}{e^{A_{j + 1}}}.
\end{gather*}
and define
\begin{gather*}
    B(p, q) = \dfrac{e^{\tilde{A}_p + \tilde{A}_q}}{e^{\tilde{A}_p} + e^{\tilde{A}_q}} |z_p - z_q|; \\
    F \left( (\tilde{A}_j)_{j = 0}^{N}, (r_j)_{j = 1}^N \right) = \max_{0 \le p < q \le N} B(p, q).
\end{gather*}
The following lemma is a reformulation of Lemma 2 in \cite{LinProd}. The proof is the same so we will not repeat it.
\begin{lm} \label{F_lemma}
    For every sequence $(x_j)_{j = 0}^N$, given by 
    \begin{gather*}
        x_0 = 0, \quad
        x_{j + 1} = \lambda_j x_j + r_{j + 1},
    \end{gather*}
    There exists $y_0$ such that the sequence defined by
    \begin{gather*}
        y_{j + 1} = \lambda_j y_j,
    \end{gather*}
    satisfies inequalities
    \begin{gather*}
        |x_j - y_j| < F\left((\tilde{A}_j)_{j = 0}^{N}, (r_j)_{j = 1}^N\right), \quad \forall j \in \{0, \dots, N\}.
    \end{gather*}
\end{lm}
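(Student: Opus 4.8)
The goal is to bound the shadowing distance $|x_j - y_j|$ for a pseudotrajectory of the (scalar) linear nonautonomous system $x_{j+1} = \lambda_j x_j + r_{j+1}$ by the explicit quantity $F = \max_{0 \le p < q \le N} B(p,q)$. Since the homogeneous solutions are $y_j = y_0 e^{A_j}$, I want to choose $y_0$ and then estimate the difference $x_j - y_j$. The natural first step is to solve the inhomogeneous recurrence explicitly by variation of constants: dividing by $e^{A_j}$ linearizes it. Writing $u_j = x_j / e^{A_j}$, the recurrence becomes $u_{j+1} = u_j + r_{j+1}/e^{A_{j+1}}$ with $u_0 = 0$, so $u_j = \sum_{p=1}^{j} r_p / e^{A_p} = z_j$ in the paper's notation. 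Hence $x_j = e^{A_j} z_j$, and for a homogeneous solution $y_j = e^{A_j} y_0$ we get $x_j - y_j = e^{A_j}(z_j - y_0)$.

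\medskip

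\textbf{Choice of $y_0$ and the two-sided estimate.} Now $|x_j - y_j| = e^{A_j} |z_j - y_0|$, and I must pick a single $y_0$ making all of these small simultaneously. The obstruction is that $e^{A_j}$ can be large for some $j$ (in the expanding regime) — so for those $j$ I need $|z_j - y_0|$ correspondingly tiny — while for other $j$ (contracting regime) $e^{A_j}$ is small and $|z_j - y_0|$ is allowed to be larger. The quantity $B(p,q) = \frac{e^{A_p+A_q}}{e^{A_p}+e^{A_q}}|z_p - z_q|$ is exactly the "harmonic-mean-weighted" gap between the $z$'s at indices $p,q$; it is the natural lower bound for $\max_j e^{A_j}|z_j - y_0|$ over the choice of $y_0$, because if $y_0$ lies between $z_p$ and $z_q$ then $|z_p - y_0| + |z_q - y_0| \ge |z_p - z_q|$, and balancing $e^{A_p}|z_p-y_0|$ against $e^{A_q}|z_q-y_0|$ forces the larger of the two to be at least $B(p,q)$. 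So the strategy is: choose $y_0$ to (approximately) minimize $\max_j e^{A_j}|z_j - y_0|$ — concretely, one can take $y_0$ to be the point realizing $\min_{y} \max_{0\le p < q \le N}$ of the relevant balance, or more simply argue that the minimax value $\min_{y_0} \max_j e^{A_j}|z_j - y_0|$ equals $\max_{p<q} B(p,q) = F$ by a one-dimensional convexity/Helly argument. Then $|x_j - y_j| \le F$ for all $j$, and since the statement only asks for strict inequality, a harmless $\varepsilon$-perturbation of $y_0$ (or noting the max is over finitely many indices and generically not attained) upgrades $\le$ to $<$.

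\medskip

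\textbf{Main obstacle.} The crux is the one-dimensional minimax identity: that the best homogeneous approximant achieves error exactly $F = \max_{p<q} B(p,q)$. One clean way is to treat $g(y_0) = \max_{0 \le j \le N} e^{A_j}|z_j - y_0|$ as a maximum of finitely many convex (piecewise-linear) functions of $y_0$, hence convex and coercive, so it attains its minimum; at the minimizer at least two of the "active" affine pieces — say those from indices $p$ and $q$ with $z_p \le y_0 \le z_q$ — must balance with opposite slopes, which gives $e^{A_p}(y_0 - z_p) = e^{A_q}(z_q - y_0)$, and solving this linear equation for $y_0$ and substituting back yields precisely $B(p,q)$. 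Conversely $g(y_0) \ge B(p,q)$ for every $y_0$ and every pair $p,q$ by the triangle inequality argument above, so $\min g = \max_{p<q} B(p,q) = F$. This is exactly the content imported from \cite{LinProd}; the reformulation here is that the explicit solution $x_j = e^{A_j} z_j$ reduces the dynamical shadowing question to this finite convex optimization, and the remaining bookkeeping (handling the endpoint $j = 0$ where $z_0 = 0$, and promoting $\le$ to $<$) is routine.
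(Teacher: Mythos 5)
The paper never proves this lemma at all --- it is imported as a ``reformulation'' of Lemma 2 of \cite{LinProd} --- so there is no in-paper argument to compare with; your proof is the natural self-contained one and is correct in substance. The variation-of-constants step $x_j = e^{A_j} z_j$ is right (and your reading $z_{j+1}=z_j+r_{j+1}/e^{A_{j+1}}$ is the correct interpretation of the paper's formula, whose index $r_j$ is a typo since $r_0$ is undefined); the lower bound $\max_j e^{A_j}|z_j-y_0|\ge B(p,q)$ via the triangle inequality, and the identification of the minimizer of the convex piecewise-linear function $g(y_0)=\max_j e^{A_j}|z_j-y_0|$ by balancing one active piece on each side, are both sound and give a $y_0$ with $\max_j|x_j-y_j| = \min_{y_0} g(y_0) \le F$. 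The one flawed remark is the last: since, as your own two-sided argument shows, $\min_{y_0} g(y_0)$ equals $F$ exactly, no $\varepsilon$-perturbation of $y_0$ can upgrade $\le$ to $<$ --- perturbing the minimizer only increases the error, and in the degenerate case $r_j\equiv 0$ one has $F=0$, so the strict inequality is simply unattainable. This is a defect of the statement as printed rather than of your argument: the non-strict bound is all the paper uses later (the event $S_1$ is defined with $\le d N^\gamma$ and is estimated through $\p(F\le N^\gamma)$), so your proof establishes everything that is actually needed.
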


Let 
$
    a = \E_{\nu} (\log(\lambda))
$.
Below we will consider two cases:

\subsection{Case 1: $a < 0$} \label{sec:aneg}

We fix constants $k$ and $C$ such that the inequality (\ref{property_2}) holds. Moreover, we fix the following $\gamma_0$:
\begin{gather}
    \label{gamma_ineq}
    {\gamma_0} = 1 + (1 + \delta)\frac{\max(|\log(\lambda)|)}{k},
\end{gather}
where $\delta$ is an arbitrary positive number.

Function $\log(\lambda)$ is continuous on a compact set $X$, so there exists a positive number $d_1$, such that for $x, y \in X$ if $\dist(x, y) < d_1$ then 
\begin{gather}
    \label{continuity}
    \left| \log \left( \lambda(x) \right) - \log \left( \lambda(y) \right) \right| < -\frac{a}{2}.
\end{gather}

Now we are ready to prove Theorem \ref{skew_prod_thm}. Let $(w_j, x_j)_{j = 0}^{N}$ be a random $d$-pseudotrajectory with $d < \min(d_0, \frac{d_1}{L})$, with $d_0$ and $L$ satisfying Property \textbf{I} and Property \textbf{III} and $d_1$ defined above. We introduce the following notations:
\begin{gather}
    \label{def_A_j}
    A_j = \sum_{p = 0}^{j - 1} \log (\lambda(w_p));\\
    \label{def_r_j}
    r_j = \dfrac{x_j - \lambda(w_{j - 1}) x_{j - 1}}{d}.
\end{gather}
We will start with an estimate for the probability of the following event:
\begin{gather}\label{eqS1}
    S_1 = \{\text{$\exists \ y_0$ such that for $y_j = \lambda(w_{j - 1}) y_{j - 1}$ we have $|x_j - y_j| \le d (N + 1)^{\gamma_0}$}\}.
\end{gather}
It is clear that $|r_j| < 1$ and according to Lemma \ref{F_lemma} 
\begin{gather}
    \label{S1_reform}
    \p (S_1) \ge \p \left( F \left( (A_j)_{j = 0}^{N}, (r_j)_{j = 1}^N \right) \le (N + 1)^{\gamma_0} \right).
\end{gather}
It is easy to see that
\begin{gather}
    \label{B_ineq}
    e^{A_q} |z_q - z_p| \le \sum_{j = p + 1}^q e^{-(A_j - A_q)}
\end{gather}
and
\begin{gather}
    \label{old_ineq}
    \dfrac{e^{A_p}}{e^{A_p} + e^{A_q}} \le 1.
\end{gather}
Using (\ref{S1_reform}), (\ref{B_ineq}) and (\ref{old_ineq}) we obtain
\begin{gather}
    \label{old_ineq_2}
    1 - \p (S_1) \le \p \left( \exists\ 0 \le p < q \le N : \sum_{j = p + 1}^q e^{-(A_j - A_q)} > (N + 1)^{{\gamma_0}} \right).
\end{gather}
Measure $\nu$ is invariant with respect to the projection of the transition operator of $\mathbb{M}_d$ on $X$ (Property \textbf{III}), thus, taking $n = N - (q - (p + 1))$ we get
\begin{gather*}
    \p \left( \exists\ 0 \le p < q \le N : \sum_{j = p + 1}^q e^{-(A_j - A_q)} > (N + 1)^{{\gamma_0}} \right) \le \\
    \le (N + 1) \p \left(\exists\ n \le N : \sum_{j = n}^N e^{- (A_j - A_N)} > (N + 1)^{{\gamma_0}} \right).
\end{gather*}
After straightforward computations we obtain
\begin{gather*}
    (N + 1) \p \left(\exists\ n \le N : \sum_{j = n}^N e^{- (A_j - A_N)} > (N + 1)^{{\gamma_0}} \right) \le \\
    \le (N + 1) \p \left( \sum_{j = 0}^N e^{-(A_j - A_N)} > (N + 1)^{{\gamma_0}} \right) 
    \le (N + 1) \sum_{j = 0}^N \p \left( e^{A_N - A_j} > (N + 1)^{{\gamma_0} - 1} \right) = \\
    = (N + 1) \sum_{j = 0}^N \p \left( e^{A_j} > (N + 1)^{{\gamma_0} - 1} \right) 
    \le (N + 1) \sum_{j = 0}^N \p \left( A_j > ({\gamma_0} - 1) \log(N + 1) \right) \le \\
    \le (N + 1) \sum_{j = 0}^N \p \left( \left|\dfrac{A_j}{j} - a \right| > \dfrac{({\gamma_0} - 1) \log(N + 1)}{j} - a \right).
\end{gather*}

According to (\ref{gamma_ineq}) we know that 
\begin{gather*}
    {\gamma_0} - 1 = (1 + \delta)\frac{\max(|\log(\lambda)|)}{k}.
\end{gather*}
It follows immediately that for $j < (1 + \delta) \frac{\log(N + 1)}{k}$ we have
\begin{gather*}
    \p \left( \left|\dfrac{A_j}{j} - a \right| > \dfrac{({\gamma_0} - 1) \log(N + 1)}{j} - a \right) = 0,
\end{gather*}
because for such $j$ we have
\begin{gather*}
    \dfrac{({\gamma_0} - 1) \log(N + 1)}{j} - a > \dfrac{({\gamma_0} - 1) \log(N + 1) k }{(1 + \delta) \log(N + 1)} - a >  \max(|\log(\lambda)|) - a \ge \left|\dfrac{A_j}{j} - a \right|.
\end{gather*}
Using this fact we drop out the first $(1 + \delta) \frac{\log(N + 1)}{k}$ summands and get the following:

\begin{gather}
    \label{killed_summands}
    1 - \p (S_1) \le (N + 1) \sum_{j > (1 + \delta) \frac{\log(N + 1)}{k}}^N \p \left( \left|\dfrac{A_j}{j} - a \right| > \dfrac{({\gamma_0} - 1) \log(N + 1)}{j} - a \right).
\end{gather}

Let us denote by $w$ the point $w \left( (w_j)_{j = 0}^N \right)$ from Property \textbf{I}. We would like to replace $A_j$ with $A_j'$ to be able to use Property \textbf{II}. Just to remind the notation:
\begin{gather}
    \label{def_A_j'}
    A_j' = \sum_{p = 0}^{j - 1} \log \left( \lambda(T^p (w)) \right).
\end{gather}
We know that 
\begin{gather*}
    dist(w_j, T^j(w)) < L d < d_1,
\end{gather*}
and using (\ref{continuity}) we get 
\begin{gather*}
    |\log(\lambda(w_j)) - \log(\lambda(T^j(w))| < -\frac{a}{2}.
\end{gather*}
It easily follows that 
\begin{gather}
    \label{est_A-A'}
    \left| \frac{A_j}{j} - \frac{A_j'}{j} \right| < -\frac{a}{2},
\end{gather}
and this inequality gives us 
\begin{gather}
    \label{replaced_A_j}
    \p \left( \left|\dfrac{A_j}{j} - a \right| > \dfrac{({\gamma_0} - 1) \log(N + 1)}{j} - a \right) \le \p \left( \left|\dfrac{A_j'}{j} - a \right| > \dfrac{({\gamma_0} - 1) \log(N + 1)}{j} - \frac{a}{2} \right).
\end{gather}

Now using (\ref{killed_summands}) and (\ref{replaced_A_j}) and applying the inequality (\ref{property_2}) we obtain the following:

\begin{gather*}
    1 - \p (S_1) \le (N + 1) \sum_{j > (1 + \delta) \frac{\log(N + 1)}{k}}^N \p \left( \left|\dfrac{A_j'}{j} - a \right| > \dfrac{({\gamma_0} - 1) \log(N + 1)}{j} - \frac{a}{2} \right) \le \\
    \le (N + 1) \sum_{j > (1 + \delta) \frac{\log(N + 1)}{k}}^N \p \left( \left|\dfrac{A_j'}{j} - a \right| > - \frac{a}{2} \right) 
    \le (N + 1) \sum_{j > (1 + \delta) \frac{\log(N + 1)}{k}}^N C e^{-k j} = \\
    = C (N + 1) e^{-k (1 + \delta) \frac{\log(N + 1)}{k}} \sum_{j = 0}^{N - (1 + \delta)\frac{\log(N + 1)}{k}} e^{-k j} 
    \le C (N + 1)^{-\delta} \frac{1}{1 - e^{-k}}.
\end{gather*}

Now it is easy to see that (\ref{gamma_ineq}) implies 
\begin{gather*}
    \p(S_1) \to_{N \to \infty} 1.
\end{gather*}
For the next step we assume that $S_1$ holds. We fix $y_0$ defined in $S_1$ and introduce 
\begin{gather}
    \label{z_0}
    z_0 = y_0.
\end{gather} 
We would like to remind the reader  that we use $w = w \left( (w_j)_{j = 0}^N \right)$ from Property \textbf{II} and the distribution of $w$ equals to $\nu$. We also use the following notation:
\begin{gather}
    \label{true_gamma}
    \gamma = 2 {\gamma_0},\\
    \label{y_j}
    y_{j + 1} = \lambda(w_j) y_j,\\
    \label{z_j}
    z_{j + 1} = \lambda(T^j (w)) z_j.
\end{gather}
We would like to estimate probability of the following event: 
\begin{gather}\label{eqS2}
    S_2 = \bigg \{ \max_{j \in \{0, \dots, N\}} |z_j - y_j| > 2 d (N + 1)^{\gamma} \bigg \}.
\end{gather}
Using the fact that $|y_0| = |y_0 - x_0| \le d (N + 1)^{\gamma_0}$ and formulas (\ref{def_A_j}), (\ref{z_0}), (\ref{y_j}), (\ref{z_j}) and (\ref{def_A_j'}) we get:
\begin{gather*}
    |z_j - y_j| = |y_0| \cdot \left| e^{A_j} - e^{A_j'} \right| \le d (N + 1)^{\gamma_0} \left( e^{A_j} + e^{A_j'} \right).
\end{gather*}
Hence :
\begin{gather*}
    \p(S_2) = \p \left( \max_{j \in \{0, \dots, N\}} |z_j - y_j| > 2 d (N + 1)^{\gamma} \right) \le \\
    \le \p \left( \exists j \in \{0, \dots, N\} :\ (e^{A_j} + e^{A_j'}) > 2 (N + 1)^{\gamma_0} \right) \le \\
    \le \p \left( \exists j \in \{0, \dots, N\} :\ A_j > {\gamma_0} \log(N + 1)  \right) 
    + \p \left( \exists j \in \{0, \dots, N\} :\ A_j' > {\gamma_0} \log(N + 1) \right) \le \\
    \le \sum_{j = 0}^N \p \left( \left|\dfrac{A_j}{j} - a \right| > \dfrac{{\gamma_0} \log(N + 1)}{j} - a \right) 
    + \sum_{j = 0}^N \p \left( \left|\dfrac{A_j'}{j} - a \right| > \dfrac{{\gamma_0} \log(N + 1)}{j} - a \right).
\end{gather*}
Now we only need to estimate the two summands from previous line. Using the same estimations we have just used to obtain that $\p(S_1) \to_{N \to \infty} 1$, it is easy to see that
\begin{gather*}
    \p \left( S_2 \right) \to_{N \to \infty} 0.
\end{gather*}
To finish the proof we take $d = \dfrac{\varepsilon}{3 (N + 1)^{\gamma}}$ and obtain that for a random $d$-pseudotrajecto\-ry $(w_j, x_j)_{j = 0}^{N}$ the sequence $(y_j)_{j = 0}^{N}$ given by the equation (\ref{y_j}) will satisfy
\begin{gather*}
    |y_j - x_j| < d (N + 1)^{\gamma_0}
\end{gather*}
with probability not less than $\p(S_1)$. We also know that with probability not less than $1 - \p(S_2)$ there exist an exact trajectory $(T^j(w), z_j)_{j = 0}^{N}$ given by (\ref{z_j}) and satisfying the inequalities
\begin{gather*}
    |z_j - y_j| < 2 d (N + 1)^{\gamma}.
\end{gather*}
We have obtained that the trajectory $(T^j(w), z_j)_{j = 0}^{N}$ $\varepsilon$-shadows the pseudotrajectory $(w_j, x_j)_{j = 0}^{N}$ (since $d (N + 1)^{\gamma_0} + 2 d (N + 1)^{\gamma} < 3 d (N + 1)^{\gamma} = \varepsilon$) with probability not less than $1 - (1 - \p(S_1) + \p(S_2)) = \p(S_1) - \p(S_2) \to_{N \to \infty} 1$. To be precise, now we have proved that 
\begin{gather}\label{eq:lim3N}
    \lim_{N \to \infty} p \left( \dfrac{\varepsilon}{3 N^{\gamma}}, N, \varepsilon \right) = 1,
\end{gather}
which differs from the statement of the Theorem \ref{skew_prod_thm} by the factor $3$ in the first argument. Let us choose ${\gamma_0}'$ that lies between ${\gamma_0}$ and $1 + (1 + \delta)\frac{\max(|\log(\lambda)|)}{k}$ (and $\gamma' = 2 \gamma_0'$). Since the inequality 
\begin{gather*}
    \dfrac{\varepsilon}{N^{{\gamma}'}} < \dfrac{\varepsilon}{3 N^{\gamma}}
\end{gather*}
holds starting from some $N \in \N$ the relation \eqref{eq:lim3N} implies \eqref{main_thm_ineq} for every
${\gamma} > 2 \left( 1 + \frac{\max(|\log(\lambda)|)}{k} \right)$, 
which finishes the proof for the case $a < 0$.

\subsection{Case 2: $a > 0$}

In this subsection we will deal with 
\begin{gather*}
    a = \E_{\nu} (\log(\lambda)) > 0.
\end{gather*}
This case is similar to Section \ref{sec:aneg}. \emr{However, it could not be reduced to it by replacing $f$ with $f^{-1}$, since, even if $f$ is invertible, the distributions of jumps $x_j - \lambda(w_{j - 1}) x_{j - 1}$ does not have to be persistent under this change.}

We use the same notation as in previous subsection. Let us fix $C, k$ from Property~\textbf{II} and constant
$\gamma_0$ defined by \eqref{gamma_ineq}.
Let us consider a pseudotrajectory $(w_j, x_j)_{j = 0}^N$. We introduce $A_j$, $r_j$ using equalities \eqref{def_A_j} and \eqref{def_r_j}.
Note, that according to Lemma \ref{F_lemma} shadowing problems for pseudotrajectories with same $A_j$ and $r_j$ are equivalent, hence we can fix one arbitrary point $x_j$ from $(x_j)_{j = 0}^N$ and define the rest of them using $\lambda(w_j)$ and $r_j$. It will be convenient for us to fix $x_N = 0$. We define $S_1$ using \eqref{eqS1}.
Replacing (\ref{old_ineq}) with
\begin{gather*}
    \dfrac{e^{A_q}}{e^{A_p} + e^{A_q}} \le 1
\end{gather*}
and arguing similarly to Section \ref{sec:aneg}, we conclude
\begin{gather}
    \label{new_ineq_2}
    1 - \p (S_1) \le \p \left( \exists\ 0 \le p < q \le N : \sum_{j = p + 1}^q e^{(A_p - A_j)} > (N + 1)^{{\gamma_0}} \right).
\end{gather}
Using the fact that $\nu$ is an invariant measure for the projection of Markov chain $\mathbb{M}_d$ on the first coordinate (Property \textbf{III}) and taking $n = (q - (p + 1))$ we get
\begin{gather*}
    \p \left( \exists\ 0 \le p < q \le N : \sum_{j = p + 1}^q e^{(A_p - A_j)} > (N + 1)^{{\gamma_0}} \right) \le \\
    \le (N + 1) \p \left(\exists\ n \le N : \sum_{j = 0}^n e^{(A_0 - A_j)} > (N + 1)^{{\gamma_0}} \right).
\end{gather*}

Note that $A_0 = 0$. We proceed with the following computations:
\begin{gather*}
    (N + 1) \p \left(\exists\ n \le N : \sum_{j = 0}^n e^{-A_j} > (N + 1)^{{\gamma_0}} \right) \le \\
    \le (N + 1) \p \left(\sum_{j = 0}^N e^{-A_j} > (N + 1)^{{\gamma_0}} \right) 
    \le (N + 1) \sum_{j = 0}^N \p \left( e^{- A_j} > (N + 1)^{{\gamma_0} - 1} \right) = \\
    = (N + 1) \sum_{j = 0}^N \p \left( \left|\dfrac{A_j}{j} - a \right| > \dfrac{({\gamma_0} - 1) \log(N + 1)}{j} + a \right).
\end{gather*}

After that we repeat the proof of the fact that 
\begin{gather*}
    \p(S_1) \to_{N \to \infty} 1
\end{gather*}
from the previous subsection, replacing $a$ with $-a$. 

Now we take $y_j$ from $S_1$, $w$ defined in Property \textbf{I} and introduce $z_j$ in the following way:
\begin{gather*}
    z_N = y_N, \quad
    z_{j} = \dfrac{z_{j + 1}}{\lambda(T^j (w))}.
\end{gather*}
We define $A_j'$ by \eqref{def_A_j'} and $S_2$ by \eqref{eqS2}.
Arguing similarly to Section \ref{sec:aneg} we conclude that $\p(S_2) \to_{N \to \infty} 0$ and repeat the rest of the proof unchanged. 



\section{Applications} \label{applications}

\subsection{Bernoulli shift} \label{bernoulli}

Consider the metric space $\Sigma = \{0, 1\}$ with $\dist_{\Sigma}(0, 1) = 1$ and probability measure $\mu(\{0\}) = \mu(\{1\}) = \dfrac{1}{2}$. Let $X$ be the space $\Sigma ^ {\Z}$ with standard topology and probability measure $\nu$ that arises from the product structure. Note that the topology on~$X$ is generated by the metric
\begin{gather*}
    \dist_X (\{w^{(j)}\}_{j \in \Z}, \{\Tilde{w}^{(j)}\}_{j \in \Z}) = \dfrac{1}{2^k}, \quad \text{where } k = \min \{|j|:w^{(j)} \ne \Tilde{w}^{(j)}\}.
\end{gather*}
On the space $X$ let us define the shift map
\begin{gather*}
    T: X \to X; \quad
    T(\{w^{(i)}\}_{i \in \Z})_j = w^{(j + 1)}.
\end{gather*}

Now we will show that Properties \textbf{I}, \textbf{II}, and \textbf{III} hold for described setting. Property \textbf{III} is satisfied because $\nu$ is preserved under both operations: shift $T$ and taking a random point (with respect to the normalized measure $\nu$) in a ball of arbitrary fixed radius, whose center is distributed with respect to $\nu$. Property \textbf{I} can be proved in the following way:

\begin{lm} \label{shad_distr}
    Consider the sequence $(w_j)_{j = 0}^{N}$ that is a $d$-pseudotrajectory of $T$ in the space $X$ taken at random with respect to the projection of the Markov chain $\mathbb{M}_d$ on $X$. Then we can define $w = w \left( (w_j)_{j = 0}^N \right) \in X$ such that
    \begin{gather*}
        dist(w_j, T^j(w)) < 2 d,
    \end{gather*}
    and $w$ has the distribution equal to $\nu$.
\end{lm}

\begin{proof}
    Let us consider integer $n$ such that $\dfrac{1}{2^{n + 1}} < d \le \dfrac{1}{2^n}$. Define the map $w((w_j)_{j = 0}^{N})$ as follows:
    \begin{gather*}
        w^{(j)} = \begin{cases}
            w_0^{(j)}, \text{for $j < n$ and $j \ge n + N$,} \\
            w_p^{(n)}, \text{for $j = n + p$, where $0 < p < N$.}
        \end{cases}
    \end{gather*}
    It is clear that $dist(w_j, T^j(w)) \le \dfrac{1}{2^n} < 2d$. Now we need to show that the distribution of $w$ equals to $\nu$, which fact is clear because $\p(w^{(j)} = 0 | w^{(i)} = x_i, i < j) = \p(w^{(j)} = 1 | w^{(i)} = x_i, i < j) = \dfrac{1}{2}$.
\end{proof}

To obtain Property \textbf{II} we will need the following classical lemma about probability of large deviations:

\begin{lm}
    \label{basic_LD}
    Let $(X_j)_{j \in \N}$ be a sequence of i.i.d. random variables. 
    \emr{
    Slightly abusing notation
    throughout this Lemma we denote by $\p$ the joint distribution of $(X_j)_{j \in \N}$ and by $\E$ the expected value with respect to said distribution.} Assume that $X_j$ has only finitely many possible values:
    \begin{gather}
        \label{X_j_distr}
        X_j = v_i \text{ with probability $p_i$, where $i \in \{1, \dots, q\}$}.
    \end{gather}
    We denote by $S_n$ the partial sum of first $n$ elements of the sequence $(X_j)_{j \in \N}$:
    \begin{gather*}
        S_n = X_1 + X_2 + \dots + X_n.
    \end{gather*}
    Then there exist positive constants $C, k > 0$ (these constants depend on the distribution of $X_j$ but not $n$), such that the following inequality holds:
    \begin{gather}
        \label{basic_LD_ineq}
        \p \left( \left|\dfrac{S_n}{n} - \E(X_1)\right| > \varepsilon \right) \le C \cdot e^{- k n \varepsilon^2}.
    \end{gather}
\end{lm}

Proof of this lemma can be found in \cite{Hoeffding}.

Since $\lambda$ is continuous there exists an integer $t \in \N$ such that $w^{(j)} = \Tilde{w}^{(j)}$ with $j \in \{-t, \dots, t\}$ implies 
\begin{gather*}
    |\log(\lambda(w)) - \log(\lambda(\Tilde{w}))| \le \dfrac{|a|}{4}.
\end{gather*}
Consider $\Theta$ -- the set of all continuous functions $\psi : X \to [\min_X(\lambda), \max_X(\lambda)]$, such that $\psi(w)$ depends only on $w^{(j)}$ for $j \in \{-t, \dots, t\}$ and $|\log(\psi(w)) - \log(\lambda(w))| \le -\frac{a}{4}$. $\Theta$ can be naturally identified with a nonempty compact set in $\R^{2^{2t + 1}}$. Note that $\E_{\nu} \log(\psi)$ is a continuous function on $\Theta$ and $\inf_{\psi \in \Theta} \E_{\nu} \log(\psi) \le a \le \sup_{\psi \in \Theta} \E_{\nu} \log(\psi)$. Now it follows that there exists a function $\Tilde{\lambda} \in \Theta$ such that
\begin{gather}
    \label{tilde_lambda_ineq}
    |\log(\lambda(w)) - \log(\Tilde{\lambda}(w))| < \dfrac{|a|}{4}, \quad \forall w \in X
\end{gather}
and
\begin{gather*}
    \E_{\nu} \log(\Tilde{\lambda}) = \E_{\nu} \log(\lambda) = a.
\end{gather*}
From now on we fix such $\Tilde{\lambda}$.
\begin{lm} \label{LD_tilde_lm}
    There exist constants $\Tilde{C}, \Tilde{k} > 0$ such that if we take a random point $w \in X$ distributed with respect to $\nu$ and $\hat{A}_j$ given by formula:
    \begin{gather*}
        \hat{A}_j = \sum_{p = 0}^{j - 1} \log (\Tilde{\lambda}(T^p(w))),
    \end{gather*}
    then 
    \emr{\begin{gather}
        \label{LD_tilde_lambda}
        \nu \left( \bigg \{ w \in X : \left| \dfrac{\hat{A}_j}{j} - a \right| > \varepsilon \bigg \} \right) < \Tilde{C} e^{- \Tilde{k} \varepsilon^2 j} \quad \forall \varepsilon > 0, j \in \N.
    \end{gather}}
\end{lm}

\begin{proof}
    Random variables $\Tilde{\lambda}(w), \Tilde{\lambda}(T^{2t + 1} w), \dots, \Tilde{\lambda}(T^{p(2t + 1)} w)$ are independent since elements $w^{(i)}$ and $w^{(j)}$ are independent if $i \ne j$ and $\Tilde{\lambda}$ depends only on the coordinates with indices in $\{-t, \dots, t\}$. Let us consider the partial sums that are given by the following formula:
    \begin{gather*}
        \hat{A}_j^{(q)} = \sum_{p = 0}^{\left[ \frac{j - q}{2t + 1} \right]} \log \left( \Tilde{\lambda} \left( T^{p (2t + 1) + q}(w) \right) \right), \quad \text{where $q \in \{0, \dots, 2t\}$.}
    \end{gather*}
    Note that for all $q$ the values of $\hat{A}_j^{(q)}$ are sums of i.i.d. random variables and function $\Tilde{\lambda}$ has only finite number of possible values, hence a large deviation principle from Lemma \ref{basic_LD} is applicable, so there are constants $C_0, k_0$, such that:
    \emr{\begin{gather*}
        \nu \left( \bigg \{ w \in X : \left| \dfrac{\hat{A}_j^{(q)}}{[\frac{j - q}{2t + 1}]} - a \right| > \varepsilon \bigg \} \right) < C_0 e^{- k_0 \varepsilon^2 [\frac{j - q}{2t + 1}]} \quad \forall \varepsilon > 0, j \in \N.
    \end{gather*}}
    Also observe that 
    \emr{\begin{gather*}
        \nu \left( \bigg \{ w \in X : \left| \dfrac{\hat{A}_j}{j} - a \right| > \varepsilon \bigg \} \right) \le \sum_{q = 0}^{2t} \nu \left( \bigg \{ w \in X : \left| \dfrac{\hat{A}_j^{(q)}}{[\frac{j - q}{2t + 1}]} - a \right| > \varepsilon \bigg \} \right),
    \end{gather*}}
    hence if we take constant $\Tilde{k} = \frac{k_0}{4t + 2}$ and $\Tilde{C}$ large enough, the inequality (\ref{LD_tilde_lambda}) will hold.
\end{proof}

Finally, we are prepared to prove that Property \textbf{II} holds for Bernoulli shift. 

Recall, that 
\begin{gather*}
    A_j' = \sum_{p = 0}^{j - 1} \log \left( \lambda(T^p (w)) \right).
\end{gather*}
for a random point $w \in X$ distributed with respect to measure $\nu$. We want to prove that for some constants $C, k > 0$ formula (\ref{property_2}) holds.

Let us note that due to (\ref{tilde_lambda_ineq}) we have
\begin{gather*}
    \left| \frac{A_j'}{j} - \frac{\hat{A}_j}{j} \right| < \frac{|a|}{4},
\end{gather*}
hence
\emr{\begin{gather*}
    \nu \left( \bigg \{ w \in X : \left| \dfrac{A_j'}{j} - a \right| > \frac{|a|}{2} \bigg \} \right) \le \nu \left( \bigg \{ w \in X : \left| \dfrac{\Tilde{A}_j}{j} - a \right| > \frac{|a|}{4} \bigg \} \right).
\end{gather*}}
By Lemma \ref{LD_tilde_lm} we have
\emr{\begin{gather*}
    \nu \left( \bigg \{ w \in X : \left| \dfrac{\Tilde{A}_j}{j} - a \right| > \frac{|a|}{4} \bigg \} \right) < \Tilde{C} e^{-\frac{\Tilde{k} a^2 j}{4}}.
\end{gather*}}
We take $C = \Tilde{C}$ and $k = \frac{\Tilde{k} a^2}{4}$ and the proof is finished. Now we have checked Properties \textbf{I}, \textbf{II}, and \textbf{III}, so we have shown that Theorem \ref{skew_prod_thm} can be applied to a skew product over Bernoulli shift. 

\subsection{Doubling map on a circle}
\label{chap:3}

Consider $X = \R / \Z \cong \T^1$ -- a circle, that is parametrized by a segment $[0, 1)$. For $w_1, w_2 \in X$ we will use $|w_1 - w_2|$ to denote the length of the shortest arc between $w_1$ and $w_2$. We consider map 
\begin{gather*}
    T: X \to X; \quad 
    T(w) = 2w \mod 1.
\end{gather*}
Measure $\nu$ is the Lebesgue measure on the circle. As always, we assume that 
\begin{gather}
    \label{2x_lambda_exp}
    a = \E_{\nu}(\log(\lambda)) \ne 0.
\end{gather}

Once again, we need to check Properties \textbf{I}, \textbf{II}, and \textbf{III} for $(X, T, \nu)$. Property \textbf{III} is straightforward, because Lebesgue measure is invariant for the doubling map, and taking a random point in a ball of arbitrary fixed radius, whose center is distributed with respect to Lebesgue measure preserves Lebesgue measure as well.

The following lemma proves that Property \textbf{I} is satisfied.

\begin{lm} \label{shad_distr_2x}
    Consider the sequence $(w_j)_{j = 0}^{N - 1}$ that is a $d$-pseudotrajectory of $T$ in the space $X$ taken at random with respect to the projection of the Markov chain $\mathbb{M}_d$ on $X$. Then we can define $w = w \left( (w_j)_{j = 0}^N \right) \in X$ such that
    \begin{gather*}
        dist(w_j, T^j(w)) < d,
    \end{gather*}
    and $w$ has the distribution equal to $\nu$.
\end{lm}

\begin{proof}
    First, note that $w_0$ is distributed with respect to $\nu$ (which is just a Lebesgue measure). Let us denote by $r_j$ the deviation of our pseudotrajectory on the $j$-th step:
    \begin{gather*}
        r_j = w_j - 2 w_{j - 1}, \quad j \in \{1, \dots, N\}.
    \end{gather*}
    We choose $r_j$ such that $|r_j| \le d$. Note, that according to our construction $r_j$ are distributed uniformly on an interval $(-d, d)$. Consider 
    \begin{gather*}
        w = \left( w_0 + \sum_{j = 1}^N 2^{-j} r_j \right) \mod 1.
    \end{gather*}
    Let us first prove that 
    \begin{gather*}
        |T^p(w) - w_p| < d.
    \end{gather*}
    To do so let us first prove by induction that
    \begin{gather*}
        T^p(w) = \left( w_p + \sum_{j = p + 1}^N 2^{p - j} r_j \right) \mod 1.
    \end{gather*}
    For $p = 0$ this formula coincides with the definition of $w$. Assume that it is true for $p$. Then
    \begin{gather*}
        T^{p + 1}(w) = \left(2 \cdot \left(w_p + \sum_{j = p + 1}^N 2^{p - j} r_j \right) \right) \mod 1 = \\
        = \left(2 w_p + r_{p + 1} + \sum_{j = p + 2}^N 2^{p + 1 - j} r_j \right) \mod 1 
        = \left(w_{p + 1} + \sum_{j = p + 2}^N 2^{p + 1 - j} r_j \right) \mod 1.
    \end{gather*}
    Now it is easy to see that 
    \begin{gather*}
        |T^p(w) - w_p| = \left| \sum_{j = p + 1}^N 2^{p - j} r_j \right| \le \sum_{j = p + 1}^N 2^{p - j} d < d.
    \end{gather*}
    It remains to say that $w_0$ and $r_1, r_2, \dots, r_N$ are independent, hence the distribution of $w$ is just a shift of Lebesgue measure $\nu$, which is again a Lebesgue measure $\nu$.
\end{proof}

Arguing similarly to the case of Bernoulli shift we construct the function $\Tilde{\lambda}$. First, since $\log(\lambda)$ is continuous, there exists a number $t \in \N$ such that if $|w - \Tilde{w}| < 2^{-t}$ then 
\begin{gather*}
    \left| \log(\lambda(w)) - \log(\lambda(\Tilde{w})) \right| < \dfrac{|a|}{4}.
\end{gather*}
We consider the set $\Theta$ of functions $\psi : X \to [\min_X(\lambda), \max_X(\lambda)]$, such that $\psi(w)$ is constant on the segments $[\frac{k}{2^t}, \frac{k + 1}{2^t})$ for $k \in \{0, 2^t - 1\}$, 
\begin{gather*}
    \left| \log(\psi(w)) - \log(\lambda(w)) \right| \le \frac{|a|}{4} \quad \forall w \in X.
\end{gather*}
$\Theta$ can be identified with a nonempty compact set in $\R^{2^{t}}$. Since $\E_{\nu} \log(\psi)$ is a continuous function on $\Theta$ (since $\min_X(\lambda) > 0$) and $\inf_{\psi \in \Theta} \E_{\nu} \log(\psi) \le a \le \sup_{\psi \in \Theta} \E_{\nu} \log(\psi)$, we obtain that there exists $\Tilde{\lambda} \in \Theta$, such that
\begin{gather}
    \label{2x_tilde_ineq_1}
    \left| \log(\lambda(w)) - \log(\Tilde{\lambda}(w)) \right| < \dfrac{|a|}{4}, \quad \forall w \in X,
\end{gather}
and 
\begin{gather*}
    \E_{\nu} \log \left( \Tilde{\lambda} \right) = \E_{\nu} \log \left( \lambda \right) = a.
\end{gather*}

Another important observation is the following one: for a random point $w$ that has distribution $\nu$ the random variables $\log \left( \Tilde{\lambda}(w) \right)$, $\log \left( \Tilde{\lambda}(T^t(w)) \right)$, \dots, $\log \left( \Tilde{\lambda}(T^{pt}(w)) \right)$ will be i.i.d. (because $\Tilde{\lambda}$ depends only on first $t$ digits after point in the binary numerical system) with finite number of possible values. We will use following notation:
\begin{gather*}
    \hat{A}_j = \sum_{p = 0}^{j - 1} \log \left( \Tilde{\lambda}(T^p(w)) \right).
\end{gather*}
The proof of Lemma \ref{LD_tilde_lm} can be repeated without any changes, so we will only give its formulation.

\begin{lm} \label{2x_LD_tilde_lm}
    There exist constants $C, k > 0$ such that if we take a random point $w \in X$ distributed with respect to $\nu$ and $\hat{A}_j$ given by formula:
    \begin{gather*}
        \hat{A}_j = \sum_{p = 0}^{j - 1} \log \left( \Tilde{\lambda} \left( T^p(w) \right) \right),
    \end{gather*}
    then 
    \emr{\begin{gather*}
        \nu \left( \bigg \{ w \in X : \left| \dfrac{\hat{A}_j}{j} - a \right| > \varepsilon \bigg \} \right) < C e^{- k \varepsilon^2 j} \quad \forall \varepsilon > 0, j \in \N.
    \end{gather*}}
\end{lm}

The rest of the proof from Subsection \ref{bernoulli} can be repeated without any changes, and we obtain that Theorem \ref{skew_prod_thm} holds for skew products over the Doubling map on a circle.

\subsection{Linear hyperbolic map on a torus}

In this subsection we will consider a skew product over a linear hyperbolic map on a torus and will apply our technique to establish shadowing property for it. We will use the same notations as in the previous part.

Let us consider $X = \T^m = \R^m/\Z^m$ and map
\begin{gather*}
    T: X \to X, \quad T(w) = A w,
\end{gather*}
where $A$ is a hyperbolic matrix with integer entries and $|\det(A)| = 1$. We will denote by $E^s$ and $E^u$ stable and unstable subspaces of the action of $A$ on $\R^m$ and by $S$ and $U$ the projections on $E^s$ along $E^u$ and on $E^u$ along $E^s$. Since $A$ is hyperbolic, we know that 
\begin{gather*}
    S + U = \id.
\end{gather*}
We will also need to fix constants $B > 0$ and $t \in (0, 1)$, such that for all $n \in \N$
\begin{gather}
    |A^n v| < B t^n |v|, \quad \text{if $v \in E^s$;}\\
    |A^{-n} v| < B t^n |v|, \quad \text{if $v \in E^u$.}
\end{gather}
As usual,
\begin{gather*}
    f: X \times \R \to X \times \R, \\
    f(w, x) = (T(w), \lambda(w) x).
\end{gather*}
and $\lambda: X \to \R$ is a continuous function. Measure $\nu$ is a Lebesgue measure on $\T^m$. Once again, we assume that 
\begin{gather*}
    \E_{\nu}(\log(\lambda)) \ne 0.
\end{gather*}

We will show that Properties \textbf{I}, \textbf{II}, and \textbf{III} holds for this system. Property \textbf{III} is once again proved by the observation that Lebesgue measure is invariant for $T$ since $\det(A) = 1$ and it is also invariant with respect to taking a random point in a ball of arbitrary fixed radius, if the center of the ball is distributed with respect to Lebesgue measure.

We prove that Property \textbf{I} holds with the following lemma.

\begin{lm} \label{shad_distr_cat_map}
    Consider the sequence $(w_j)_{j = 0}^{N}$ that is a $d$-pseudotrajectory of $T$ in the space $X$ taken at random with respect to the projection of the Markov chain $\mathbb{M}_d$ on $X$. Then we can define $w = w \left( (w_j)_{j = 0}^{N} \right) \in X$ such that
    \begin{gather*}
        dist(w_j, T^j(w)) < \frac{2}{1 - t} d,
    \end{gather*}
    and $w$ has the distribution equal to $\nu$.
\end{lm}

\begin{proof}
    First of all, let us introduce 
    \begin{gather*}
        r_{j + 1} = w_{j + 1} - A w_j, \quad \text{for $j \in \{0, \dots, N - 1\}$}.
    \end{gather*}
    Because of the fact that the sequence $(w_j)_{j = 0}^{N}$ is a $d$-pseudotrajectory we can define $r_j$ in such a way that $|r_j| < d$.
    Consider 
    \begin{gather}
        \label{w_def_cats}
        w = w_0 + \sum_{j=1}^{N} A^{-j} U r_j.
    \end{gather}
    Straightforward computation shows that
    \begin{gather}
        \label{recursion_fla}
        T^k(w) = w_k - \sum_{j = 1}^k A^{k - j} S r_j + \sum_{j = k + 1}^N A^{k - j} U r_j.
    \end{gather}
    Now we can estimate $|T^k(w) - w_k|$ in a following way:
    \begin{gather*}
        |T^k(w) - w_k|  
        \le \sum_{j = 1}^{k + 1} |A^{k + 1 - j} S r_j| + \sum_{j = k + 2}^N |A^{k + 1 - j} U r_j| < \frac{2}{1 - t} d.
    \end{gather*}
    Now we need to prove that $w$, defined by (\ref{w_def_cats}) is distributed with respect to $\nu$. It is clear that $w_0$ is distributed with respect to $\nu$ and $(r_j)_{j = 1}^N$ is independent with $w_0$. Hence $w$ is distributed with respect to $\nu$, because $\nu$ is a Lebesgue measure, and Lebesgue measure is translation-invariant.
\end{proof}

In order to check Property \textbf{II} we will use theory of large deviatons for unformly hyperbolc systems developed in \cite{Orey}.

\begin{thm}
    \label{OreyPelican}
    Assume $T : M \to M$ is a transitive Anosov diffeomorphism of a manifold $M$ and $\nu$ is a Lebesgue measure. Then for each continuous function $\varphi$ there is a lower semicontinuous $K^{\varphi}: \R \to [0, \infty]$ which satisfies the following properties:
    \begin{enumerate}
        \item $
            \limsup_{n \to \infty}{n ^ {-1}} \log \nu \{ x : \frac{1}{n} \sum_{0}^{n - 1} \varphi (T^j (x)) \in A \} \le - \inf\{ K^{\varphi} (t): t \in A \},
        $
        
        for closed $A \subset \R$;
        
        \item $
            \liminf_{n \to \infty}{n ^ {-1}} \log \nu \{ x : \frac{1}{n} \sum_{0}^{n - 1} \varphi (T^j (x)) \in A \} \ge - \inf\{ K^{\varphi} (t): t \in A \},
        $
        
        for open $A \subset \R$.
    \end{enumerate}
    
    Moreover, $K^{\varphi}$ is defined by the following formula (here $\mathcal{M}$(M) stands for the set of all Borel probability measures on $M$):
    \begin{gather}
        K^{\varphi} (t) = \inf \left\{ K(m): m \in \mathcal{M} (M) \text{ and } \int \varphi \dd m = t \right\},
    \end{gather}
    where
    \begin{gather}
        K(m) = 
        \begin{cases}
            -h_{m} (T) + \int \log \left| \det \left( \D T|_{E^u} \right) \right|\dd m, \quad \text{if $m$ is invariant under $T$,} \\
            + \infty, \quad \text{otherwise,}
        \end{cases}
    \end{gather}
    where $h_{m}(T)$ is the metric entropy of $T$.
\end{thm}

Now we will apply this theorem to our setting. According to Pesin entropy formula we have

\begin{gather*}
    h_{\nu} (T) = \int \log \left| \det \left( \D T|_{E^u} \right) \right|\dd \nu,
\end{gather*}
where $\nu$ is a Lebesgue measure. It is also well known that $\nu$ is a unique measure of maximal entropy for $T$ (see \cite[Theorem 20.1.3]{KH}), hence $K(m) \ge 0$ and 
\begin{gather*}
    K(m) = 0 \Leftrightarrow m = \nu.
\end{gather*}
We consider $A = \R \setminus \left( a - \frac{|a|}{4}, a + \frac{|a|}{4} \right)$ and $\varphi = \log(\lambda)$, and by Theorem \ref{OreyPelican} we have 
\begin{gather*}
    \limsup_{n \to \infty}{n ^ {-1}} \log \nu \left\{ x : \left| \frac{1}{n} \sum_{0}^{n - 1} \varphi (T^j (x)) - a \right| \ge \frac{|a|}{4} \right\} \le - \inf \left\{ K^{\varphi} (t): |t - a| \ge \frac{|a|}{4} \right\}.
\end{gather*}
Let us prove that 
\begin{gather}
    \label{nonzero_exp}
    \inf \left\{ K^{\varphi} (t): |t - a| \ge \frac{|a|}{4} \right\} > 0.
\end{gather}
Note that function $K$ is lower semi-continuous with respect to weak-* topology on the space of measures, because entropy is upper semi-continuous. According to the definition of $K^{\varphi} (t)$ we have
\begin{gather*}
    \inf \left\{ K^{\varphi} (t): |t - a| \ge \frac{|a|}{4} \right\} = \inf \left\{ K(m): m \in \mathcal{M} (M) \text{ and } \left| \int \varphi \dd m - a \right| \ge \frac{|a|}{4}  \right\}.
\end{gather*}
Let us note that the set 
\begin{gather*}
    \left\{ m \in \mathcal{M} (M): \left| \int \varphi \dd m - a \right| < \frac{|a|}{4}  \right\}
\end{gather*}
is open in weak-* topology. Hence, the set
\begin{gather*}
    \left\{ m \in \mathcal{M} (M): \left| \int \varphi \dd m - a \right| \ge \frac{|a|}{4}  \right\} = \mathcal{M} (M) \setminus \left\{ m \in \mathcal{M} (M): \left| \int \varphi \dd m - a \right| < \frac{|a|}{4}  \right\}
\end{gather*}
is compact and the lower semi-continuous function $K$ attains its infimum on that set. Since $\nu$ is the only global minimum for $K$, we have
\begin{gather*}
    \inf \left\{ K^{\varphi} (t): |t - a| \ge \frac{|a|}{4} \right\} = \min \left\{ K^{\varphi} (t): |t - a| \ge \frac{|a|}{4} \right\} > 0,
\end{gather*}
which proves (\ref{nonzero_exp}).
Hence
\emr{\begin{gather*}
    \nu \left( \bigg \{ w \in X : \left| \dfrac{A_j'}{j} - \E_{\nu}(\log(\lambda)) \right| > \frac{|a|}{2} \bigg \} \right) < C e^{- k j} \quad \forall j \in \N
\end{gather*}}
with appropriate $C, k > 0$ and Property \textbf{II} holds, which implies that
Theorem \ref{skew_prod_thm} can be applied in this case.


\end{document}